\newcommand{\E}{{\bf{E}}}
\newcommand{\PP}{{\bf{P}}}
\newcommand{\Var}{{\bf{Var}}}
\newtheorem{tm}{Theorem}
\theoremstyle{definition}
\begin{document}
\bibliographystyle{plain}
\parindent=0pt
\centerline{\LARGE \bfseries Birth of a strongly connected giant}
\centerline{\LARGE \bfseries in an  inhomogeneous random digraph }

{\large  \qquad\qquad\qquad \qquad \qquad}\\

\par\vskip 3.5em

\centerline{ M. Bloznelis \footnote{Research supported by SFB 701 at the University of Bielefeld, Germany}, \  F. G\"otze, \  J. Jaworski\footnote{Research supported by the Marie Curie Intra-European Fellowship No. 236845 (RANDOMAPP) within the 7th EU Framework Programme}}

\bigskip

\bigskip

 Vilnius University,  \qquad      \qquad        Bielefeld University    \qquad    \qquad  Adam Mickiewicz University

 LT-03225 Vilnius     \qquad      \qquad     \ \    D-33501 Bielefeld    \qquad      \qquad \ \   61--614 Pozna\'n

 Lithuania     \qquad      \qquad   \qquad    \qquad   Germany      \qquad  \qquad       \qquad   \qquad  \   Poland

\par\vskip 2.5em

\begin{abstract}
We present and investigate a general model  {for} inhomogeneous random
digraphs  {with labeled vertices}, where  {the} arcs  {are generated} independently, and the
probability of  {inserting}  an arc depends
on  {the labels} of its endpoints and  {its} orientation.
For this model the critical point  {for the emergence of a giant component}
is determined  {via a} branching  {process} approach.
\par\end{abstract}

\smallskip
{\bfseries key words:}  inhomogeneous digraph, phase transition, giant component.

\par\vskip 5.5em
\section{Introduction}
Random directed graphs (digraphs) are widely used for modeling
networks arising e.g. in physics, biology,  social studies and,  more recently, bioinformatics, linguistics and
 {the analysis of networks in the internet}.

It has been observed that the well studied  classical homogeneous random digraph model $D(n,p)$, where
arcs are inserted independently and with the same probability $p$,
may not fit  real life networks,  because the latter  often exhibit statistical properties, such as, e.g.,
power law in-degree/out-degree
distribution, that are inconsistent with the model $D(n,p)$. Generally, the real life networks are inhomogeneous
(see, e.g., Albert and Barab\'asi \cite{Albert}, Dorogovtsev, Mendes and Samukhin \cite{DorogovtsevMS},
Durrett \cite{Durrett2007}, Newman, Strogatz and Watts \cite{NSW2001}).

In this paper we study a very general model of sparse inhomogeneous random digraphs with {\it independent} arcs.
By 'sparse'
we mean that the number of arcs does not grow faster
 than linearly in $n$, where $n$ is the number
of particles (called vertices of the digraph). By inhomogeneity we mean that different arcs
  {are inserted} with different probabilities. In particular, our random digraph
  model is able to produce
a wide class of asymptotic in-degree  and out-degree distributions  including power law distributions.

The main question addressed in the paper is the description of the phase
transition in  {the} (strongly connected)
cluster size, that is, the
emergence of a giant strongly connected component.
To answer this question we establish  a first order asymptotic for the number $N_1$ of vertices
in the largest strongly connected component, by showing $N_1=\rho n+o_P(n)$.
The fraction  $\rho\ge 0$  is expressed in terms of  survival probabilities
of related branching processes that reflect the statistical properties  of  a
certain neighbourhood of a  randomly chosen vertex.


 Let us mention that mathematically rigorous results on phase transition in  $D(n,p)$ were
 established in  Karp \cite{Karp} and  {\L}uczak \cite{Luczak1990},
 see also
{\L}uczak  and Seierstad \cite{LuczakSd2009}.   {More general  three
  parameter models were}
studied in {\L}uczak and Cohen \cite{LuczakC1992}.
  The phase transition  in a random digraph with given (non-random)
in-degree and out-degree  sequences was shown in Cooper and Frieze \cite{Cooper-Frieze2004}. We  also want to mention the related in-depth
study of the phase
transition phenomenon in general inhomogeneous random graphs  {in} Bollob\'as, Janson and Riordan \cite{BJR}, which has inspired  our
work on inhomogeneous digraphs.
\bigskip

The paper is organized as follows. In Section 2 we define  {a}  finite dimensional model of a random inhomogeneous
digraph and for this model determine the critical point of the phase transition. Section 3 extends finite
dimensional results to  inhomogeneous digraphs defined by very general (possibly infinite dimensional)
kernels.
Proofs are postponed to Section 4.

\section{The finite dimensional model}

Before presenting our results we briefly recall some relevant facts and notation related to digraphs.

{\bf 2.1.} A digraph $D$ on the vertex set $V=\{v_1,\dots, v_n\}$ is a subset of the set $[V]^2=\{(u,v),u,v\in~V\}$ of all ordered pairs of elements of $V$.  Elements of $D$ are called directed edges or arcs. The fact that $(u,v)$ is an element of $D$ is denoted  $\{u \rightarrow v\}\in D$ or just $u \rightarrow v$.
More generally, if for some distinct vertices $w_1,\dots w_k\in V$ the collection of arcs
${\cal P}=\{(w_1,w_2)$, $(w_2, w_3)$, $\, \dots,$ $(w_{k-1}, w_k)\}$ is a subset of   $D$ then  ${\cal P}$ is called
a directed path ({\it d-path}) starting at $w_1$ and ending at $w_k$, and
denoted  {by}  $w_1\rightsquigarrow w_k$.
If $v\rightsquigarrow w$ and $w\rightsquigarrow v$  then  {the}
vertices $v$ and $w$ are said to {\it communicate}. In this case  we write $v\leftrightsquigarrow w$. In addition,
we  {define}  
that $v\leftrightsquigarrow v$, for every $v\in V$, even in the case where
the loop $v\rightsquigarrow v$ is not present in $D$.
 A digraph is called
{\it strongly connected} if every pair of its vertices  {communicates}.
%
 {Since}  $'\leftrightsquigarrow '$ is an equivalence relation, it  splits the vertex set
$V$ into a union of disjoint subsets 
 {of elements communicating}  with each other.
 The subgraph of $D$ induced by such a subset of vertices is  called a
 strongly connected component ({\it SC-component}).

 We are interested in the fast growth of the largest SC-component when the
 density of random arcs gradually increases  in the range $\Theta(n^{-1})$.
  Here the important parameter is  the size  $N_1=N_1(D)$ of the largest SC-component
 (the number of vertices of the SC-component, which has the largest number of vertices).
Another interesting parameter is  the size  $N_2=N_2(D)$ of the second largest SC-component.

{\bf 2.2.} We assume that vertices 
 {belong to different types}  and  {that} the probability of an arc
depends   on the types of its endpoints and the scale parameter $n$
 {only}.
In addition,  we assume
 throughout this section
 that the set of different types is finite and as the number of  {vertices} increases.
 {A~similar}  model of {\it random graphs} (but not digraphs) has been  introduced  {by} S\"oderberg \cite{soderberg}.

Let us introduce some  {more} notation. Let $S=\{s_1, s_2, \dots, s_k\}$ denote the set of types,
 and  let $s(v)$   denote the type   of
 a vertex $v\in V$.
We write
$n=n_1+n_2+\dots+n_k$,
where each $n_i=\# \{v\in~V:~s(v)=s_i\}$ 
 {denotes the number of}  vertices of type $s_i\in S$.

Given a integer vector ${\overline n}=(n_1,\dots, n_k)$ and a
 $k\times k$ matrix ${\mathbb P}=||p_{ij}||$ with non-negative entries, define the inhomogeneous
  random digraph ${\cal D}$ on the vertex set $V$ as follows. The set of arcs of ${\cal D}$ is drawn at random from $[V]^2$
  so that events $\{ u \rightarrow v\}\in{\cal D}$ are independent and have probabilities
  $\PP(u \rightarrow v) =  1\wedge(p_{ij} n^{-1})$, for each $(u,v)\in [V]^2$. Here $i$ and $j$ refer to the types
  $s_i=s(u)$ and $s_j=s(v)$ of the endpoints $u$ and $v$.
  We use the  notation $a\wedge b$ and $a\vee b$ for
$\min\{a,b\}$ and
$\max\{a,b\}$ respectively.
  In order to stress the dependence of the model on the parameters
${\mathbb P}$ and ${\overline n}$ we sometimes write ${\cal D}={\cal D}_{{\mathbb P}, {\overline n}}$.

  We shall assume that the fraction of vertices of a given type is asymptotically constant. That is, there is a probability distribution $Q$ on the type space $S$ such that for each $q_i=Q(s_i)$ we have
\begin{equation}\label{Q-n}
q_i>0
\qquad
{\text{and}}
\qquad
n_i-q_in=o(n)
\qquad
{\text{as}}
\qquad
n\to\infty.
\end{equation}

Clearly, for large $n$, the typical/statistical characteristics of the digraph ${\cal D}_{{\mathbb P},{\overline n}}$ depend solely on the distribution $Q$ and matrix ${\mathbb P}=||p_{ij}||$.  In order to describe the phase transition
in ${\cal D}_{{\mathbb P},{\overline n}}$ in terms of $Q$ and  ${\mathbb P}$ we use the "language" of branching processes.

 Let us  consider
 multi-type Galton-Watson processes where particles are of  types from
 $S$.
 Given $s\in S$, let  ${\cal X}(s)$ (respectively ${\cal Y}(s)$) denote the Galton-Watson process starting
 at a particle of type $s$ such that the number of children of type $s_j\in S$ of
 a particle of type $s_i\in S$ has Poisson distribution with
 mean
$p_{ij}q_j$ (respectively $p_{ji}q_j$), $1\le i,j\le k$.
We write ${\cal X}=\{{\cal X}(s),\, s\in S\}$ and ${\cal Y}=\{{\cal Y}(s),\, s\in S\}$.
Let $\rho_{\cal X}(s)$ and $\rho_{\cal Y}(s)$ denote the non-extinction probability of
 ${\cal X}(s)$ and ${\cal Y}(s)$ respectively.
Write
\begin{equation}\label{RO-1}
\rho=\rho_{{\cal X}{\cal Y}}=\sum_{1\le i\le k}\rho_{\cal X}(s_i)\rho_{\cal Y}(s_i)q_i.
\end{equation}

We show in Theorem 1 below that a giant SC-component of range $n$ emerges in ${\cal D}$ whenever   $\rho$ is positive.

\begin{tm}\label{T1} Assume that (\ref{Q-n}) holds.  Assume that the matrix ${\mathbb P}=||p_{ij}||$ is irreducible.
 As $n\to\infty$ we have
\begin{equation}\label{result}
N_1({\cal D}_{{\mathbb P},{\overline n}})=\rho_{{\cal X}{\cal Y}}\, n+o_P(n)
\end{equation}
and $N_2=o_P(n)$.
\end{tm}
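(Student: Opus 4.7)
The strategy follows the branching-process exploration paradigm adapted to digraphs. For $v\in V$ write $\Gamma^+(v)$ and $\Gamma^-(v)$ for the sets of vertices reachable from and to $v$, respectively, so that the SC-component containing $v$ equals $\Gamma^+(v)\cap\Gamma^-(v)$. A convenient feature of the directed setting is that the arcs $u\to w$ and $w\to u$ are independent Bernoullis, so the forward and backward BFS explorations from $v$ use disjoint families of arcs and are thus \emph{mutually independent}.

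The first step is a local coupling. Fix a threshold $T=T(n)$ with $T\to\infty$ and $T=o(\sqrt n)$. In the BFS started at $v$ of type $s_i$, a processed vertex of type $s_j$ discovers each previously unseen vertex of type $s_\ell$ with probability $1\wedge(p_{j\ell}/n)$; as long as fewer than $T$ vertices have been visited only $o(n)$ vertices have been exposed, so the count of new type-$s_\ell$ children of a type-$s_j$ particle is Poisson$(p_{j\ell}q_\ell)$ in total-variation distance $o(1)$. Standard multi-type coupling then yields $\PP(|\Gamma^+(v)|\ge T)\to\rho_{\mathcal{X}}(s(v))$ and $\PP(|\Gamma^-(v)|\ge T)\to\rho_{\mathcal{Y}}(s(v))$. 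Call $v$ \emph{good} when both inequalities hold; by independence,
\[
\PP(v\text{ is good})\to\rho_{\mathcal{X}}(s(v))\rho_{\mathcal{Y}}(s(v)),
\]
and summing over types gives $\E[\#\{\text{good}\}]=\rho n+o(n)$. A second-moment argument, running two BFS trees in parallel (only $o(n)$ vertices are exposed before either reaches $T$), delivers concentration, so $\#\{\text{good}\}=\rho n+o_P(n)$. Since the SC-component of any non-good vertex is contained in $\Gamma^+(v)\cap\Gamma^-(v)$ and hence has size $<T$, this already yields $N_2<T=o_P(n)$ once we know that the good vertices all lie in a single SC-component, and then $N_1\ge\rho n+o_P(n)$.

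The main obstacle is therefore the merging step: showing that the good vertices lie in one SC-component w.h.p. I would use sprinkling: split $p_{ij}/n$ into $(1-\varepsilon)p_{ij}/n$ and an independent residual of order $\varepsilon/n$, and expose the two resulting digraphs separately. In the first digraph identify the set $\mathcal{G}$ of good vertices, and strengthen the coupling by continuing the BFS past $T$ to show that for every $v\in\mathcal{G}$ the sets $\Gamma^+(v),\Gamma^-(v)$ in fact have cardinality $\Theta(n)$ with the correct asymptotic type composition (this uses that a supercritical branching process conditioned to reach size $T$ grows to linear size). For any pair $u,v\in\mathcal{G}$, irreducibility of $\mathbb{P}$ supplies types $s_i,s_j$ with $p_{ij}>0$ for which $\Gamma^+(u)$ contains $\Theta(n)$ vertices of type $s_i$ and $\Gamma^-(v)$ contains $\Theta(n)$ vertices of type $s_j$; the residual arcs between these two sets form a Binomial with mean $\Omega(\varepsilon n)$, producing a $u\rightsquigarrow v$ path with probability $1-e^{-\Omega(\varepsilon n)}$. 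Applying this in both directions and union-bounding over ordered pairs in $\mathcal{G}$ merges $\mathcal{G}$ into a single SC-component. Letting $\varepsilon\to 0$ after $n\to\infty$ recovers $N_1=\rho n+o_P(n)$ and $N_2=o_P(n)$. The delicate points I expect to require most care are the extension of the BFS--branching-process coupling past $T$ to linear size with the correct type profile, and the bookkeeping needed to convert the pairwise merging probabilities into a global union bound without losing the $\varepsilon n$ exponent.
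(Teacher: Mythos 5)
The most serious issue is the independence claim. You write that the forward and backward BFS explorations ``use disjoint families of arcs and are thus mutually independent,'' justified by the independence of $u\to w$ and $w\to u$. This is not right. When the forward exploration processes a vertex $x$ it queries every arc $(x,y)$ with $y$ not yet in the forward cluster; when the backward exploration processes $y$ it queries every arc $(x,y)$ with $x$ not yet in the backward cluster. So any arc $(x,y)$ with $x\in\Gamma^+(v)$ and $y\in\Gamma^-(v)$ is examined by \emph{both} explorations --- even when the two clusters are vertex-disjoint apart from $v$. The independence of the two opposite directed arcs is irrelevant here, since the overlap is in the same ordered pair. The paper repairs this by running the forward exploration first to obtain $X_\omega(v)$, then restricting the backward exploration to the vertex set $V^0=(V\setminus X_\omega(v))\cup\{v\}$, where the queried arc sets genuinely are disjoint, and separately bounding the probability that the two explorations intersect (the event $\mathcal{A}$, shown to have probability $1-o(1)$ when $\omega\le\ln n$). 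The factorization $\PP(v\ \text{good})=\rho_{\mathcal{X}}(s(v))\rho_{\mathcal{Y}}(s(v))+o(1)$ then comes out of a conditioning argument, not from a priori independence; you need this step.

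The merging route is a genuine but costlier alternative. You propose to show that for good $v$ the sets $\Gamma^+(v),\Gamma^-(v)$ grow to size $\Theta(n)$ with the correct type profile, by ``continuing the BFS past $T$.'' This is a real dichotomy lemma that is not a consequence of the local coupling --- the exploration and the branching process separate once a constant fraction of some type is exhausted, and conditioning a branching process to reach size $T$ does not by itself give $\Theta(n)$ in the graph. The paper avoids this lemma entirely: since its concentration statement $n^{-1}|B(\omega)|\to\rho$ is first shown for $\omega_0=\ln n$ and then transferred by a symmetric-difference argument to every $\omega(n)=o(n)$ tending to infinity, it can take $\omega_1(n)=n/\ln n$ directly, so that $\omega_1$-big vertices already reach $\ge n/\ln n$ vertices before any sprinkling. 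A $k$-round sprinkling then walks along a shortest path in the type digraph $D_P$ to accumulate $\Theta(n/\ln n)$ reached vertices of \emph{each} type, after which a final round of fresh arcs connects every pair of big vertices with probability $1-O(n^{-4})$ and a union bound finishes. Your route can be made to work (it is in the spirit of the BJR analysis), but you should flag the $\Theta(n)$-growth lemma as an additional nontrivial ingredient rather than a byproduct of the coupling, and note that your $T=o(\sqrt n)$ threshold is too small for the merging unless that lemma is supplied, whereas the paper's $n/\ln n$ threshold sidesteps it.
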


\medskip
Here for a sequence of random variables $\{ Z_n\}$ we write $Z_n=o_P(n)$ if $\lim_n\PP(|Z_n|>\delta n)=0$ for
each $\delta>0$.

Recall that the matrix $||p_{ij}||$ is called {\it irreducible} if its associated  digraph $D_P$ is strongly connected. Here $D_P$ is the digraph on the vertex set $S$ such that $\{s_i\rightarrow s_j\}\in D_P$ whenever $p_{ij}>0$.

{\it Remark.} In the case where $D_P$ is not strongly connected, Theorem 1 can be applied to its strongly connected components.
Let $D_{1}$,\dots, $D_{r}$ denote
the strongly connected components of $D_P$. Given $1\le m\le r$, let
 $S_m\subset S$ denote the vertex set of $D_{m}$ and
let ${\cal D}_m$ denote the subgraph of the
random digraph ${\cal D}$ induced  by the vertex set $V_m=\{v\in V:\, s(v)\in S_m\}$.
  Note that, for $i\not=j$, any two vertices  $v\in V_i$ and
 $u\in V_j$ do
 not communicate in ${\cal D}$. Therefore, each strongly connected component of ${\cal D}$ is a subgraph of some
 ${\cal D}_m$.
 The asymptotic size of the largest strongly connected component of ${\cal D}_m$ is, by Theorem 1,
 $N_1({\cal D}_m)=\rho_m n+o_P(n)$. Here we denote $\rho_m=\sum_{s\in S_m}\rho_{\cal X}(s)\rho_{\cal Y}(s)Q(s)$.
 It follows now that $N_1({\cal D})=\max_{1\le m\le r}\rho_m n+o_P(n)$.
 In this way we obtain an extension of (\ref{result})
 to general (not necessarily strongly connected) digraphs $D_P$.

\section{The general model}
The result of Theorem \ref{T1} extends to a much more general situation where the type space $S$ is~a~separable metric space,
$Q$ is a probability measure defined on Borel sets of $S$. Here the matrix $||p_{ij}||$ is replaced by
a non-negative $S\times S$ measurable kernel $\kappa(s,t)$, $s,t\in S$.

Let $x_1,x_2,\dots$ be a sequence of random variables with values in $S$  such that the empirical distribution
of the first $n$ observations $x_1,\dots, x_n$
 approximates the measure $Q$ in proba\-bi\-lity as $n\to\infty$. That is, we assume that
 for each $Q$-continuous Borel set
$A\subset S$ we have
$\#\{i\in~[1,n]:~x_i\in A\}n^{-1}=Q(A)+o_P(1)$ as $n\to\infty$. Recall that a Borel set $A$ is
called {\it $Q$-continuous}
whenever its boundary $\partial A$ has zero probability $Q(\partial A)=0$.

Given $n$, let ${\cal D}_n$ be the random digraph on the vertex set $\{x\}_1^n=\{x_1,\dots, x_n\}$ with
independent arcs
having probabilities $\PP(\{x_i\rightarrow x_j\}\in  {\cal D}_n)=1\wedge(n^{-1} \kappa(x_i,x_j))$,
$1\le i,j\le n$. Combining $S$, $Q$ and $\kappa$ we obtain a very large class of inhomogeneous digraphs
with independent arcs. Obviously, the model
 will include digraphs with in-degree and out-degree distributions which have power laws.

Such a general model, for {\it random graphs} (not digraphs), was introduced in Bollob\'as, Janson and Riordan \cite{BJR}.
Note that in the case of random
graphs it is necessary to assume, in addition, that the kernel $\kappa$ is symmetric. In the definition of digraphs
 ${\cal D}_n$, $n\ge 2$, we do not require
the symmetry of the kernel.

For large $n$, the phase transition in the digraph ${\cal D}_n$ can be described in terms of the survival
probabilities of the related multi-type Galton--Watson branching processes with type space $S$.
 Given $s\in S$, let  ${\cal X}(s)$ (respectively ${\cal Y}(s)$) denote the Galton-Watson process starting
 at~a~particle of type $s$ such that the number of children of types in a subset $A\subset S$
  of
 a particle of type $t\in S$ has Poisson distribution with
 mean
$\int_{A}\kappa(t,u)Q(du)$ (respectively $\int_{A}\kappa(u,t)Q(du)$). These numbers are independent
for disjoint subsets $A$ and for different particles.
 The critical point of the emergence of the giant SC-component is determined  by the averaged joint survival probability
\begin{equation}\label{RO-W2}
\rho_{{\cal X}{\cal Y}}=\int_S\rho_{\cal X}(s)\rho_{\cal Y}(s)Q(ds)
\end{equation}
being positive.
Here $\rho_{\cal X}(s)$ and $\rho_{\cal Y}(s)$ denote the non-extinction probabilities of
 ${\cal X}(s)$ and ${\cal Y}(s)$ respectively.
  In particular, for the general model of an inhomogeneous digraph,
  (\ref{result}) reads
 as follows
\begin{equation}\label{result-??}
N_1({\cal D}_n)=\rho_{{\cal X}{\cal Y}}\, n+o_P(n)
\qquad
{\text{as}}
\qquad
n\to \infty.
\end{equation}
In order to establish (\ref{result-??}) we need to impose  further conditions on the
kernel $\kappa$, like those in \cite{BJR}.
Namely, we need to assume that
 the
kernel $\kappa$ is  irreducible $Q\times Q$ almost everywhere (see {Remark} after Theorem \ref{T1}). That is, for any measurable $A\subset S$ with
$Q(A)\not=1$ or $0$,
the identity
$Q\times Q\bigl(\bigl\{(s,t)\in A\times (S\setminus A):\, \kappa(s,t)\not=0\bigr\}\bigr)=0$
 implies $Q(A)=0$ or $Q(S\setminus A)=0$,
see \cite{BJR}.
In addition we assume that $\kappa$ is continuous almost everywhere on $(S\times S, Q\times Q)$,
and the number of arcs in ${\cal D}_n$, denoted by $|{\cal D}_n|$, satisfies
\begin{displaymath}
n^{-1}\E|{\cal D}_n|\to \int\int_{S\times S}\kappa(s,t)Q(ds)Q(dt)<\infty
\end{displaymath}
 as $n\to\infty$. Note that  here we implicitly assume that $\kappa$ is integrable, i.e.,
 $\kappa\in L_1(S\times S,Q\times Q)$.

We will not give the proof of (\ref{result-??}). It can be obtained  from Theorem \ref{T1} via a finite
dimensional
approximation argument similar to that of the proof of Theorem 3.1 in \cite{BJR}.

\section{Proof}
In the proof we shall use ideas and techniques developed  in Karp \cite{Karp} and Bollob\'as, Janson and Riordan \cite{BJR}.

\begin{proof}[Proof of Theorem  \ref{T1}]
Given a vertex $v\in V$, let $X(v)$ denote the set of vertices that can be reached from $v$ via d-paths,
and let $Y(v)$ denote the set of starting points of d-paths ending at $v$,
\begin{displaymath}
X(v)=\{u\in V:\, v\rightsquigarrow u\},
\qquad
Y(v)=\{u\in V:\, u\rightsquigarrow v\}.
\end{displaymath}

Given  a function $\omega(n)$ such that $\omega(n)\to \infty$ and $\omega(n)=o(n)$ as $n\to\infty$,
we  say that $v\in V$ is {\it $x-$big} (respectively {\it $y-$big})
if $|X(v)|\ge \omega(n)$ (respectively, $|Y(v)|\ge \omega(v)$). The set of $x-$big (respectively $y-$big) vertices is
denoted $B_x=B_x(\omega)$ (respectively $B_y=B_y(\omega)$). We write
$B=B(\omega)=B_x(\omega)\cap B_y(\omega)$ for the set of vertices which are  $x-$big and $y-$big simultaneously.

We show that for any such function $\omega$,
\begin{equation}\label{BB-1}
n^{-1}|B(\omega)|-\rho_{{\cal X}{\cal Y}}=o_P(1)
\qquad
{\text{as}}
\quad
n\to\infty.
\end{equation}
For this purpose it suffices to show that for each $\omega$ we have
\begin{equation}\label{BB-2}
n^{-1}\E|B(\omega)|=\rho_{{\cal X}{\cal Y}}+o(1)
\end{equation}
and to establish (\ref{BB-1}) for at least one such  function, say  $\omega_0(n)=\ln n$.

Indeed, assume that (\ref{BB-2}) holds.
Let $\omega,\omega'$ be two such functions and $B,B'$ denote the correspond\-ing sets of large vertices.
For the size of the symmetric difference $B\bigtriangleup B'=(B\cup B')\setminus (B\cap B')$ equals
$|B(\omega\wedge\omega')|-|B(\omega\vee\omega')|\ge 0$, we obtain from (\ref{BB-2}) that $\E|B\bigtriangleup B'|=o(n)$.
It follows that $\bigl||B|-|B'|\bigr|\le |B\bigtriangleup B'|=o_P(n)$. In particular,  (\ref{BB-1}) holds for every $\omega$ whenever it is satisfied
by at least one such function $\omega$.

{\it Proof of (\ref{BB-2})}.
{\it Forward exploration.}
Given $v\in V$, we explore the set $X(v)$ as follows. Color all vertices blue. Color $v$ white
and put it in the list, which now contains a single white vertex $v$.
Then proceed recursively:\, choose a white vertex from the list, color it black,
reveal all { outgoing arcs emerging} 
from this vertex to blue vertices, color these vertices white and add
 {them} to the list.
Stop when we have collected at least $\omega(n)$ vertices in the list ({hence} $v\in B_x(\omega)$),
or there are no white vertices left in the list
( {hence} we have explored entire set $X(v)$ and $v\notin B_x(\omega)$). Let $X_{\omega}(v)$ denote the set of vertices
 collected in the list. Given $u,w\in X(v)$ we say that $u$ is an {\it f-child} (forward child) of $w$ if
 $\{w\rightarrow u \}\in {\cal D}$ and
 $u$ was blue when
 $w$ discovered it during the exploration process.
 {Since}  the last black vertex  adds to the list all its  blue neighbours (endpoints of
outgoing arcs), the size $|X_{\omega}(v)|$ can not exceed $\omega(n)$ by more than the out-degree
$\Delta_x$
of the last black vertex.

{\it Backward exploration.}
We perform the same exploration process starting at $v$ as above,
but
in the transposed digraph ${\cal D}^{*}$, which is obtained from ${\cal D}$ by reversing direction of arcs
($\{v\rightarrow~u\}\in~{\cal D}\Leftrightarrow\{u\rightarrow v\}\in {\cal D}^{*}$).
That is, now the search {for} neighbours propagates 
 {in the reverse}  direction of  {the}  arcs of ${\cal D}$.
  Let $Y_{\omega}(v)$ be the subset of $Y(v)$ obtained in at most $\omega(n)$
  steps of  {the} exploration.
Given  $u,w\in Y_{\omega}(v)$, the vertex $u$ is called a {\it  $b-$child} (backward child)
of $w$ if
$\{u\rightarrow w\}\in {\cal D}$
and $u$ was blue when
 $w$ discovered it. Again, we have $|Y_{\omega}(v)|\le \omega(n)+\Delta_y$, where now $\Delta_y$ is the
out-degree in ${\cal D}^{*}$ (in-degree in ${\cal D}$) of the vertex last
explored.

By a coupling of an exploration process with the approximating Galton-Watson process, Bollo\-b\'as, Janson and Riordan
\cite{BJR} showed
that the fraction of large vertices (the number $|B(\omega)|/n$) converges in probability to the  survival
probability of the Galton-Watson process, see Lemma 9.6 in \cite{BJR}.
Their results are stated  for (undirected) random graphs, but
several steps of their proof extend to random digraphs as well.
In particular, by a coupling of  the
forward exploration process with ${\cal X}$ (backward exploration process with ${\cal Y}$) we obtain as $n\to\infty$
uniformly in $v\in V$
\begin{eqnarray}\label{B-xy1}
&&
\PP(v\in B_x(\omega))=\rho_{\cal X}(s(v))+o(1),
\qquad
\,
\PP(v\in B_y(\omega))=\rho_{\cal Y}(s(v))+o(1),
\\
\label{B-xy2}
&&
\PP(\Delta_x>\ln n)=O(n^{-1}),
\qquad
\qquad
\qquad
\PP(\Delta_y>\ln n)=O(n^{-1}).
\end{eqnarray}

\bigskip

Let us show (\ref{BB-2})  for  $\omega(n)$ satisfying $\omega(n)\le \ln n$.
Introduce the events
${\cal A}_x(v)=\{|X_{\omega}(v)|\ge\omega(n)\}$ and ${\cal A}_y(v)=\{|Y_{\omega}(v)|\ge\omega(n)\}$.
Since  $|X_{\omega}(v)|\ge\omega(n) \Leftrightarrow v\in B_x(\omega)$ and
$|Y_{\omega}(v)|\ge\omega(n) \Leftrightarrow v\in~B_y(\omega)$ we have
$\PP(v\in B(\omega))=\PP({\cal A}_x(v)\cap{\cal A}_y(v))$. In view of (\ref{B-xy2})  with a high probability
each of the events ${\cal A}_x(v)$ and ${\cal A}_y(v)$ refer
to at most $\omega(n)+\ln n\le 2\ln n$ vertices. Therefore, we may expect that for large $n$ these events
are almost independent and we have (see (\ref{B-xy1}))
\begin{equation}\label{B-xy5}
\PP\bigl(v\in B(\omega)\bigr) = \PP({\cal A}_x(v)\cap{\cal A}_y(v))=\rho_{\cal Y}(s(v))\rho_{\cal X}(s(v))+o(1).
\end{equation}
We show that (\ref{B-xy5}) holds uniformly in $v\in V$. Then (\ref{BB-2})  follows from (\ref{Q-n}), (\ref{RO-1}) and
 (\ref{B-xy5}) via the identities
\begin{equation}\label{B-xy5+}
\E|B(\omega)|=\E\sum_{v\in V}{\mathbb I}_{\{v\in B(\omega)\}}=\sum_{v\in V}\PP\bigl(v\in B(\omega)\bigr).
\end{equation}
Let us prove (\ref{B-xy5}). Let ${\cal A}=\{X_{\omega}(v)\cap Y_{\omega}(v)=v\}$ denote the event that two exploration processes
after starting at $v$ do not meet each other in the first $\omega(n)$ steps of the exploration. Observe, that
uniformly in $v\in V$
\begin{equation}\label{B-xy3}
\PP({\cal A})=1-o(1)
\qquad
{\text{as}}
\qquad
n\to\infty.
\end{equation}
Indeed, assume that the set $X_{\omega}(v)$ is already constructed and now we construct the set $Y_{\omega}(v)$.
  Note that conditionally, given
 $X_{\omega}(v)$  {being} of size at most $\omega(n)+\ln n$, each black vertex of $Y_{\omega}(v)$ discovers at least one
b-child in $X_{\omega}(v)$ with probability at most $|X_{\omega}(v)|p_*n^{-1}\le (\omega(n~+~\ln n)p_*n^{-1}$, where
$p_*=\max_{i\le i,j\le k}p_{ij}$. 
Since there are at most $\omega(n)$ black vertices in $Y_{\omega}(v)$,
we conclude that on the event ${\cal D}_x=\{|X_{\omega}(v)|\le \omega(n)+\ln n\}$
the conditional proba\-bi\-lity
\begin{equation}\label{B-xy4}
\PP\bigl({\overline{\cal A}}\, \bigr|\, X_{\omega}(v)\bigr)\le \omega(n)(\omega(n)+\ln n) p^*n^{-1}.
\end{equation}
Here ${\overline {\cal A}}$ denotes the   {complementary event} to ${\cal A}$.
It follows from (\ref{B-xy4})
that  $\PP({\overline{\cal A}}\cap {\cal D}_x)=o(1)$.
The latter bound combined with (\ref{B-xy2}) shows (\ref{B-xy3}).

Now we are ready to show (\ref{B-xy5}). Assume again that $X_{\omega}(v)$  {has} already
 {been} constructed.  {Now we have} to  construct the set $Y_{\omega}(v)$.
 The vertices of $V'=V\setminus X_{\omega}(v)$ remain blue. In particular
for every $i$ the set $V'$ contains
 at least $n_i-(\omega(n)+\ln n)=n_i(1-o(1))$ blue vertices of type $s_i$. Conditionally on
the event ${\cal A}\cap{\cal D}_x$, the exploration of  $Y(v)$ (until we stop it after at most $\omega(n)$ steps)
stays  within the set $V'$ of
size $n(1-o(1))$. The second identity of (\ref{B-xy1}) applies to the conditional probability
$\PP\bigl({\cal A}_y(v)\,\bigr|\, X_{\omega}(v), {\cal A}\bigr)$ and yields
\begin{equation}\nonumber
\PP\bigl({\cal A}_y(v)\,\bigr|\, X_{\omega}(v), {\cal A}\bigr)= \rho_{\cal Y}(s(v))+o(1)
\end{equation}
uniformly in $X_{\omega}(v)$, satisfying the event ${\cal D}_x$.
Therefore, we have
\begin{eqnarray}\nonumber
\PP\bigl({\cal A}_y(v)\cap{\cal A}_x(v)\cap{\cal A}\cap{\cal D}_x\bigr)
&
=
&
\PP\bigl({\cal A}_y(v)\,\bigr|\,{\cal A}_x(v)\cap{\cal A}\cap{\cal D}_x\bigr)
\PP({\cal A}_x(v)\cap{\cal A}\cap{\cal D}_x)
\\
\nonumber
&
=
&
\rho_{\cal Y}(s(v))\, \PP({\cal A}_x(v)\cap{\cal A}\cap{\cal D}_x)+o(1).
\end{eqnarray}
In view of (\ref{B-xy2}), (\ref{B-xy3}) we can replace $\PP\bigl({\cal A}_y(v)\cap{\cal A}_x(v)\cap{\cal A}\cap{\cal D}_x\bigr)$
by $\PP({\cal A}_y(v)\cap{\cal A}_x(v))$ and $\PP({\cal A}_x(v)\cap{\cal A}\cap{\cal D}_x)$ by $\PP({\cal A}_x(v))$.
Therefore, we obtain $\PP({\cal A}_x(v)\cap{\cal A}_y(v))=\rho_{\cal Y}(s(v))\PP({\cal A}_x(v))+o(1)$. Finally, invoking (\ref{B-xy1})
we obtain(\ref{B-xy5}).
{Thus} we have proved (\ref{BB-2}) for $\omega(n)$ satisfying the extra condition $\omega(n)\le \ln n$.

Let us  {now} prove  (\ref{BB-2}) for arbitrary $\omega$ (satisfying $\omega(n)\to\infty$ and $\omega(n)=o(n)$ as $n\to\infty$).
Fix such an $\omega$. We apply (\ref{BB-2}) to $\omega'(n)=\omega(n) \wedge \ln(n)$
and invoking the inequality
$|B(\omega)|\le |B(\omega')|$,  {we} obtain the upper bound $\E|B(\omega)|\le n\rho_{{\cal X}{\cal Y}}+o(n)$.
The corresponding lower bound
\begin{equation}\label{BB-3}
\E|B(\omega)|\ge n\rho_{{\cal X}{\cal Y}}+o(n).
\end{equation}
follows from (\ref{B-xy5+}) and the inequalities, which hold uniformly in $v\in V$,
\begin{equation}\label{B-xy6}
\PP(v\in B(\omega))\ge \rho_{\cal X}(s(v))\rho_{\cal Y}(s(v))-o(1).
\end{equation}
To show that (\ref{B-xy6}) holds,
we  {first} perform a forward exploration starting at $v$ and obtain the set $X_{\omega}(v)$.  {Afterwards},
in the digraph induced by the vertex set $V^0:=\bigl(V\setminus X_{\omega}(v)\bigr)\cup\{v\}$
we perform a backward
exploration  starting at $v$. The set of  {the} thus discovered vertices is
denoted  {by}  $Y^0_{\omega}(v)$.
For  a set $V^0$ containing at least $n_i-(\omega(n)+\ln n)=n_i(1-o(1))$ vertices of every type $s_i\in S$,
the approximation of the distribution of the backward exploration process by the distribution of the Galton-Watson process ${\cal Y}(v)$
remains valid. That is, the second identity of (\ref{B-xy1}) applies to the conditional
probability $\PP\bigl(|Y^0_{\omega}(v)|\ge \omega(n)\,\bigr|\, X_{\omega}(v)\bigr)$.
We have
\begin{equation}\label{B-xy7}
\PP\bigl(|Y^0_{\omega}(v)|\ge \omega(n)\,\bigr|\, X_{\omega}(v)\bigr)=\rho_{\cal Y}(s(v))+o(1),
\end{equation}
uniformly $v$ and in $X_{\omega}(v)$ satisfying $|X_{\omega}(v)|\le \omega(n)+\ln n$.
Since $Y(v)$ contains $Y^0_{\omega}(v)$ as~a~subset, we obtain
\begin{displaymath}
\PP\bigl(|Y(v)|\ge \omega(n)\,\bigr|\, X_{\omega}(v)\bigr)\ge \rho_{\cal Y}(s(v))+o(1).
\end{displaymath}
The latter inequality in combination with the first identity of (\ref{B-xy1}) and the first bound of (\ref{B-xy2})
shows (\ref{B-xy6}). We now have arrived at  (\ref{BB-3}), thus completing the
proof of (\ref{BB-2}).
\vskip 0.05cm
{\it Proof of (\ref{BB-1}).} We prove (\ref{BB-1}) for a
 particular function $\omega_0(n)=\ln n$. Note that (\ref{BB-1}) follows from (\ref{BB-2}) and the bound
\begin{equation}\label{VB-1}
\Var|B(\omega_0)|=o(n^2)
\end{equation}
via  Chebyshev's inequality.
In addition, (\ref{VB-1}) follows from
the identities
\begin{eqnarray}\nonumber
&&
\Var |B(\omega_0)|=\E |B(\omega_0)|^2-(\E|B(\omega_0)|)^2,
\\
\nonumber
&&
\E |B(\omega_0)|(|B(\omega_0)|-1)
=
2\E\sum_{\{u,v\}\subset V}{\mathbb I}_{\{v\in B(\omega_0)\}}{\mathbb I}_{\{u\in B(\omega_0)\}}
\end{eqnarray}
combined with the identity, which holds uniformly  in $\{u,v\}\subset V$,
\begin{equation}\nonumber
\E {\mathbb I}_{\{v\in B(\omega_0)\}}{\mathbb I}_{\{u\in B(\omega_0)\}}
=
\rho_{\cal X}(s(v))\rho_{\cal Y}(s(v))\rho_{\cal X}(s(u))\rho_{\cal Y}(s(u))+o(1).
\end{equation}
The latter asymptotic identity is shown in much the same way as  (\ref{B-xy5})
above.
\vskip 0.1cm
{\it Proof of (\ref{result}).} Write $N_1=N_1({\cal D}_{{\mathbb P},{\overline n}})$.
Note that for every  $\omega(n)$ we have $N_1\le \omega(n)\vee|B(\omega)|$.
In combination with (\ref{BB-1}) this inequality implies the upper bound
\begin{equation}\label{BB-upper}
N_1n^{-1}\le \rho_{{\cal X}{\cal Y}}+o_P(1).
 \end{equation}
 Here and below for a sequence of random variables $\{Z_n\}$  we write $Z_n\le o_P(1)$
 (respectively   $Z_n\ge o_P(1)$) if for every $\delta>0$ we have
$\lim_n\PP(Z_n\le \delta)=1$ (respectively $\lim_n\PP(Z_n\ge -\delta)=1$).

\smallskip

For $\rho_{{\cal X}{\cal Y}}=0$ the result (\ref{result}) follows from (\ref{BB-upper}).
 For $\rho_{{\cal X}{\cal Y}}>0$ the result (\ref{result}) follows from  (\ref{BB-upper}) and the lower bound
\begin{equation}\label{BBB-0}
N_1n^{-1}\ge \rho_{{\cal X}{\cal Y}}+o_P(1).
 \end{equation}
In order to show this lower bound
we generate the digraph ${\cal D}$  in two steps. Firstly we generate
a digraph ${\cal D}'={\cal D}_{||p'_{ij}||, {\overline n}}$ and then, on the top of it,
we generate another digraph ${\cal D}''={\cal D}_{||p''_{ij}||, {\overline n}}$ independently of ${\cal D}'$.
Here the numbers $p'_{ij}$ and $p''_{ij}$ are defined by the equations
\begin{equation}\label{p'ij}
p'_{ij}=p_{ij}(1-\varepsilon),
\qquad
(1-p''_{ij}n^{-1})(1-p'_{ij}n^{-1})=1-p_{ij}n^{-1},
\qquad
1\le i,j\le k.
\end{equation}
Here $0<\varepsilon<1$ is fixed and we assume that $n$ is so large that all $p_{ij}n^{-1}<1$.
The union ${\cal D}'\cup {\cal D}''$ is the digraph on the vertex set $V$ such that,  for each $(u,v)\in [V]^2$,
we have $\{v\rightarrow u\}\in {\cal D}'\cup {\cal D}''$
whenever
$\{v\rightarrow u\}$
is present in at least one of the digraphs ${\cal D}'$ and ${\cal D}''$.
Note that, by the second equation of (\ref{p'ij}),
the random digraphs ${\cal D}'\cup {\cal D}''$ and ${\cal D}$ have the same probability distribution.

Let ${\cal X}'=\{{\cal X}'(s),\, s\in S\}$ and ${\cal Y}'=\{{\cal Y}'(s),\, s\in S\}$ be the multi-type
Galton-Watson processes with Poisson offspring distributions that
approximate  the forward and backward explorations of neighbourhoods of vertices in ${\cal D}'$. They are defined
in the same way as ${\cal X}$ and ${\cal Y}$
above, but with respect to the matrix $||p'_{ij}||$.
Let $\rho_{[\varepsilon]}=\rho_{{\cal X}'{\cal Y}'}$ be defined by (\ref{RO-1}). One can show (e.g.,
by coupling of ${\cal X}(s)$ with ${\cal X}'(s)$ and ${\cal Y}(s)$ with ${\cal Y}'(s)$) that
$\rho_{\cal X}'(s)\to\rho_{\cal X}(s)$ and $\rho_{\cal Y}'(s)\to\rho_{\cal Y}(s)$ as $\varepsilon \downarrow 0$.
In particular, we have
\begin{equation}\label{RO-2}
\lim_{\varepsilon\to 0}\rho_{[\varepsilon]}=\rho_{{\cal X}{\cal Y}}.
\end{equation}

We are now ready to prove (\ref{BBB-0}).
Fix the function $\omega_1(n)=n/\ln n$. Given $v\in V$,  let $X'_{\omega_1}(v)$ and $Y'_{\omega_1}(v)$ denote the neighbourhoods
of $v$ discovered
by the forward and backward explorations performed in ${\cal D}'$.
Let $B'(\omega_1)$ denote the set of large vertices of ${\cal D}'$. From (\ref{BB-1}) we obtain
\begin{equation}\label{BBB-1}
n^{-1}|B'(\omega_1)|-\rho_{[\varepsilon]}=o_P(1).
\end{equation}
We shall show that with  high probability every pair of vertices from $B'(\omega_1)$ communicate in ${\cal D}$.
This will imply that $N_1$ is at least as large as $|B'(\omega_1)|$, and, as a consequence, we then obtain the lower
bound (\ref{lower-bound}).

We generate the digraph ${\cal D}''$ in $k$ steps so that ${\cal D}''={\cal D}_1\cup\cdots\cup {\cal D}_k$.
Here ${\cal D}_i$, $1\le i\le k$ are independent copies of ${\cal D}_{||p^*_{ij}||,{\overline n}}$, where
the matrix  $||p^*_{ij}||$ is defined by  the equations
\linebreak
$(1-p^*_{ij}n^{-1})^k=1-p''_{ij}n^{-1}$, $1\le i,j\le k$.
 Note that for  $\{s_i\rightarrow s_j\}\in D_P$ identities (\ref{p'ij}) imply
\begin{equation}\label{P-ij-2}
p^*_{ij}\ge k^{-1}\varepsilon p_{ij} \ge p_0,
\qquad
p_0:=k^{-1}\varepsilon \min\{p_{ij}:\, p_{ij}>0\}>0.
\end{equation}

For $i=1,\dots, k$ denote
$X^i(v)=\{u\in V:\, \{v \rightsquigarrow u\}\in {\cal D}'\cup{\cal D}_1\cup\cdots\cup{\cal D}_i\}$ and write
$V(s_i)=\{v\in V:\, s(v)=s_i\}$.
We shall show that with  high probability  every  set $X^{k-1}(v)$, $v\in B'(\omega_1)$, contains
at least
$\Theta(n(\ln n)^{-1})$ vertices of each type. More precisely,   the event
\begin{displaymath}
{\cal H}=\cap_{v\in B'(\omega_1)}\cap_{1\le i\le k}\bigl\{|X^{k-1}(v)\cap V(s_i)|\ge n(\ln n)^{-1}\varkappa\bigr\},
\end{displaymath}
where
$\varkappa:=k^{-1}(p_0/4)^{k-1}q_1\times\cdots\times q_k$, has probability
\begin{equation}\label{H-1}
\PP\bigl({\cal H}\bigr)=1-o(1)
\qquad
{\text{as}}
\qquad
n\to\infty.
\end{equation}
Observe, that the event ${\cal H}$ depends on the random graph
${\cal D}^{\star}={\cal D}'\cup{\cal D}_1\cdots\cup{\cal D}_{k-1}$ and is independent of ${\cal D}_k$.
We show that on the event ${\cal H}$ the conditional probability, which now refers to  ${\cal D}_k$,
satisfies
\begin{equation}\label{N1-x}
\PP\Bigl(N_1\ge |B'(\omega_1)|\,\Bigr|\, {\cal D}^{\star}\Bigr)=1-o(1).
\end{equation}
This bound in combination with (\ref{H-1}) and (\ref{BBB-1}) implies the lower bound
\begin{equation}\label{lower-bound}
N_1n^{-1}\ge \rho_{[\varepsilon]}+o_P(1).
\end{equation}
Letting $\varepsilon \downarrow 0$ from (\ref{RO-2}) we obtain (\ref{BBB-0}).
We complete the proof of (\ref{result}) by first  showing (\ref{N1-x}) and then (\ref{H-1}).
\vskip 0.1cm

{\it Proof of (\ref{N1-x}).} Given ${\cal D}^{\star}$,  define the events
\begin{displaymath}
{\cal A}_{uv}=
\bigl\{
\exists
\ \
x\in X^{k-1}(u),
\ \
\exists
\ \
y\in Y'_{\omega_1}(v)
\ \
{\text{such that}}
\ \
\{ x \rightarrow y\}\in {\cal D}_{k}
\bigr\},
\qquad
u,v\in B'(\omega_1).
\end{displaymath}
Let ${\overline A}_{uv}$ denote the event complement to ${\cal A}_{uv}$. Introduce the sum
\begin{displaymath}
S
=
\sum_{\{u,v\}\subset B'(\omega_1)}
 {\mathbb I}_{{\overline {\cal A}}_{uv}}
 \end{displaymath}
which (given ${\cal D}^{\star}$) is at least as large as the number of pairs $\{u,v\}\in B'(\omega_1)$ that do not communicate in ${\cal D}$.
We claim that on the event ${\cal H}$, we have $S=0$ with  high (conditional given ${\cal D}^{\star}$) probability.
Indeed, the largest of  the sets $Y'_{\omega_1}(v)\cap V(s_i)$, $1\le i\le k$  is of size at least $\omega_1(n)/k$.
Assume it is the r-th set $Y_r:=Y'_{\omega_1}(v)\cap V(s_r)$.  Since $D_P$ is strongly connected,
$\{s_j\rightarrow s_r\}\in D_P$ for some $s_j$.   Given the event ${\cal H}$, the set  $X_j:=X^{k-1}(v)\cap V(s_j)$
is of size  at least  $\Theta(n(\ln n)^{-1})$.
Therefore, we have
\begin{equation}\label{u-v}
\PP\bigl({\overline {\cal A}}_{uv}\,\bigr|\,{\cal D}^{\star}\bigr)
\le
(1-p^*_{jr}n^{-1})^{|X_j|\times|Y_r|}
\le
(1-p_0n^{-1})^{|X_j|\times|Y_r|}\le c'n^{-4}.
\end{equation}
Here $c'>0$ denotes a  constant depending only  on $||p_{ij}||$ and $\varepsilon$.
It follows from (\ref{u-v})
by
Chebyshev's inequality  that given the event ${\cal H}$ we have
\begin{displaymath}
1-\PP(S=0|{\cal D}^{\star})=\PP(S\ge 1|{\cal D}^{\star})\le\E(S|{\cal D}^{\star})\le c'n^{-2}.
\end{displaymath}
Since $N_1\ge |B'(\omega_1)|-S$, the latter bound implies (\ref{N1-x}).

\smallskip

{\it Proof of (\ref{H-1}).} Let $\PP'(\cdot)=\PP(\cdot|{\cal D}')$ denote the conditional probability
given ${\cal D}'$. In order to prove (\ref{H-1}), we show that, for each $v\in B'(\omega_1)$ and $1\le i\le k$,
\begin{equation}\label{H-2}
\PP'\bigl(|X^{k-1}(v)\cap V(s_i)|< n(\ln n)^{-1}\varkappa\bigr)=O(n^{-4}).
\end{equation}

Fix $v\in B'(\omega_1)$  and $i$.
The largest of   sets $X'_{\omega_1}(v)\cap V(s_j)$, $1\le j\le k$  is of size at least $\omega_1(n)/k$.
Assume it is the r-th set $X^0:=X'_{\omega_1}(v)\cap V(s_r)$. Since $D_P$ is strongly connected we find a~shortest path
\begin{equation}\label{takas}
s_r=t_0\rightarrow t_1\rightarrow \cdots \rightarrow t_j=s_i
\end{equation}
 in $D_P$. Denote
$\varkappa_m=k^{-1}(p_0/4)^{m}Q(t_1)\cdots Q(t_m)$.
We claim that, for $1\le m\le j$,
\begin{equation}\label{H-3}
\PP'\bigl(|X^{m}(v)\cap V(t_m)|< n(\ln n)^{-1}\varkappa_m\bigr)=O(n^{-4}).
\end{equation}
Note that,  in view of the inclusions $X'_{\omega_1}(v)=X^0(v)\subset X^{1}(v)\subset\cdots\subset X^{k-1}(v)$
and  {the} inequalities $\varkappa\le \varkappa_i$, the bound
(\ref{H-3}) (for $m=j$) implies (\ref{H-2}). We show (\ref{H-3}), for $m=1,2,\dots$. Let $m=1$. Let $X^1$ denote the
set of endpoints in $V(t_1)$ of arcs in ${\cal D}_1$ that start at vertices in $X^0$. We have
\begin{displaymath}
|X^{1}(v)\cap V(t_1)|\ge |X^1|=\sum_{w\in V(t_1)}{\mathbb I}_{\{w\in X^1\}}
\end{displaymath}
The right-hand sum has binomial distribution $Bi(|V(t_1)|, p_1)$ with $|V(t_1)|=nQ(t_1)+o(n)$ trials
and  success probability
\begin{displaymath}
p_1\ge 1-(1-p_0n^{-1})^{|X^0|}\ge 1-(1-p_0n^{-1})^{\omega_1(n)/k}.
\end{displaymath}
 Indeed, $p_0n^{-1}$ is the smallest probability of arcs in ${\cal D}_1$, and
there are at least $|X^0|\ge \omega_1(n)/k$ vertices  in $X^0(v)$ that "try" to send an arc to a given  vertex $w\in V(t_1)$.
A simple analysis show that $p_1=\Theta(ln^{-1}(n))$. In particular, for large $n$ we have
$p_1\ge p_0/(2k\ln n)$. Therefore, we obtain  $\E|X^1|\ge 2\varkappa_1n(\ln n)^{-1}(1+o(1))$.
Now, an application of Chernoff bound (see, e.g., Janson, {\L}uczak and Ruci\'nski \cite{janson2001})
to the binomial random variable $|X^1|$ shows (\ref{H-3}) for $m=1$.
 Next, given the event  $\{|X^{1}(v)\cap V(t_1)|\ge n(\ln n)^{-1}\varkappa_1\}$, which is independent of
${\cal D}_2$, we show
\begin{displaymath}
\PP\Bigl(|X^{2}(v)\cap V(t_2)|< n(\ln n)^{-1}\varkappa_2\, \Bigr| \, {\cal D}',{\cal D}_1\Bigr)=O(n^{-4}).
\end{displaymath}
in much the same way. That is, we show that number of different  endpoints in $V(t_2)$ of arcs in ${\cal D}_2$
that start
in $X^{1}(v)\cap V(t_1)$ is strongly concentrated around $2\varkappa_2n(\ln n)^{-1}$. In this way,
proceeding along the path
(\ref{takas}) until the last endpoint $s_j$ and using arcs from ${\cal D}_1$, ${\cal D}_2$ etc.  for
the successive steps
respectively,
we arrive at (\ref{H-3}).

In the very last step of the proof we show $N_2=o_P(n)$.
Indeed, this bound follows immediately from (\ref{BB-1}), (\ref{BBB-0}) via the simple
inequality $N_1+N_2\le 2\omega(n)+|B(\omega)|$.
\end{proof}

{\bf Acknowledgement.} Mindaugas Bloznelis  {would like to thank} Colin Cooper for valuable and  inspiring  discussion.

\end{document}